\newtheorem{remark}{Remark}
\newtheorem{theorem}{Theorem}
\newtheorem{proposition}{Proposition}
\newtheorem{lemma}{Lemma}
\def\CQFD{$\Box$}
\newenvironment{proof}{{\it Proof}.}{\hfill \CQFD \\}
\def\NN{\mathbb{N}}
\def\ZZ{\mathbb{Z}}
\def\kk{\mathbb{K}}
\def\PP{\mathbb{P}}
\def\Sc{\mathscr{S}}
\def\Hc{\mathscr{H}}
\def\Zc{\mathcal{Z}}
\def\Pc{\mathscr{P}}
\def\coker{\mathrm{coker}}
\def\Proj{\mathrm{Proj}}
\def\deg{\mathrm{deg}}
\def\lcm{\mathrm{lcm}}
\def\Sym{\mathrm{Sym}}
\def\ann{\mathrm{ann}}
\def\sat{\mathrm{sat}}
\def\indeg{\mathrm{indeg}}
\def\Hom{\mathrm{Hom}}
\def\Homgr{\mathrm{Homgr}}
\def\im{\mathrm{im}}
\def\length{\mathrm{length}}
\def\qc{\mathfrak{q}}
\def\pp{\mathfrak{p}}
\def\mm{\mathfrak{m}}
\long\def\symbolfootnote[#1]#2{\begingroup%
\def\thefootnote{\fnsymbol{footnote}}\footnote[#1]{#2}\endgroup}
\title{Implicitization of Bihomogeneous Parametrizations of Algebraic Surfaces via Linear Syzygies}
\author{
%\alignauthor
Laurent Bus\'e, Marc Dohm }
\begin{document}

\maketitle

\begin{abstract}

We show that the implicit equation of a surface in 3-di\-men\-sio\-nal 
projective space parametrized by bi-homogeneous polynomials of bi-degree $(d,d)$, for a given
integer $d\geq 1$, can be represented and computed from the linear
syzygies of its parametrization if the base points are isolated
and form locally a complete intersection.

\end{abstract}

\section{Introduction}

\symbolfootnote[0]{Copyright ACM, 2007. This is the author's version of the work. It is posted here by permission of ACM for your personal use. Not for redistribution. The definitive version was published in 
Proceedings of the 2007 international Symposium on Symbolic and Algebraic Computation (Waterloo, Ontario, Canada, July 29 - August 01, 2007), ACM Press, New York, NY, pages 69-76, http://doi.acm.org/10.1145/1277548.1277559.}

Implicitization, i.e. finding the implicit equation of an
algebraic curve or surface defined parametrically, is a classical
problem and there are numerous approaches to its solution, most of
them based either on resultants, Gr\"{o}bner bases, or syzygies. A
good historical overview of methods based on resultants or
Gr\"{o}bner bases can be found in \cite{SC95} and \cite{Co01}.

\medskip
Syzygy-based methods - also known as ``moving surfaces'' methods -
were introduced in \cite{SC95} and have been further developed in
a number of publications. In the case of curves, these methods
show that the linear syzygies of a given parametrization can be
put together in a \emph{square} matrix whose determinant is an implicit
equation. Several attempts to generalize these results to the case
of surfaces have been made. The construction of a square
matrix whose determinant is the implicit equation requires the use
of quadratic syzygies in addition to the linear syzygies.
Moreover, these methods are only valid for birational
parametrizations and need additional assumptions in the
presence of base points.

Just to name some recent publications on sy\-zy\-gy-ba\-sed methods, we
cite \cite{Co03}, which treats the implicitization of
base-point-free homogeneous parametrizations and \cite{BCD03},
which does the same for parametrizations with base points. In
\cite{AHW05} a determinantal representation of the implicit
equation of a bi-homogeneous parametrization is constructed with
linear and quadratic relations, whereas \cite{KD06} gives such a
construction in the toric case.

\medskip

Recently it has been proved in  \cite{BuJo03} and \cite{BuCh05}
that surfaces parametrized by the projective plane can be
represented and computed only using the linear syzygies of the
parametrization, in the case where the base points are isolated
and locally complete intersections. In some sense, this result is
a natural generalization of the method of ``moving curves''
developed for planar curves in \cite{SC95}; the only difference is
that the matrix obtained in the case of surfaces is not square,
but still represents the surface (see the end of Section \ref{approx} for a
detailed explanation of this term).

\medskip

In this paper our main objective is to develop a similar
implicitization technique for surfaces given by bi-ho\-mo\-ge\-ne\-ous
parametrizations, which are of interest for a number of
applications in geometric modelling and computer-aided design. We
will show that also in this case the surface can be represented by
a non-square matrix constructed by only using linear syzygies and
we will explain how to efficiently compute this matrix with
standard computer algebra systems.

\medskip

More precisely, we focus on the following problem. Let $\kk$ be any
field (all the varieties we will consider hereafter are understood
to be taken over $\kk$). We suppose given a rational map
\begin{eqnarray*}
 \PP^1\times \PP^1 & \xrightarrow{\phi} & \PP^3 \\
(s:u)\times (t:v) & \mapsto &
(f_1:f_2:f_3:f_4)(s,u,t,v)
\end{eqnarray*}
where each polynomial $f_1,f_2,f_3,f_4$ is bi-homogeneous of
bi-degree $(d,d)$, $d$ being a given positive integer, with
respect to the homogeneous variables $(s:u)$ and
$(t:v)$. We assume that
\begin{itemize}
\item $\phi$ parametrizes a surface $\Hc$ (which is equivalent to
require that $\phi$ is a generically finite map onto its image)
which is hence irreducible
\item the greatest common divisor of
$f_1,f_2,f_3,f_4$ is a non-zero constant which essentially
requires the number of base points of $\phi$ to be finite
(possibly zero).
\end{itemize}
We aim to find a representation of $\Hc$ in terms of linear
syzygies of $f_1,f_2,f_3$ and $f_4$ similar to the known ones for
plane curves and for space surfaces parametrized by the projective
plane.

\medskip

The paper is organized as follows. In Section \ref{segre} we give
an equivalent formulation of our problem which replaces the given
$\NN\times\NN$-graduation by a single $\NN$-graduation. In Section
\ref{approx} we will introduce an associated approximation complex
that will be used in Section \ref{mainresult} to prove our main
result. Then an algorithmic version is detailed in Section \ref{algo}, as well as an illustrative example.

\section{The Segre embedding}\label{segre}

It is well-known that $\PP^1\times \PP^1$ can be embedded in $\PP^3$ through the so-called \emph{Segre embedding}
\begin{eqnarray*}
 \PP^1\times \PP^1 & \xrightarrow{\rho} & \PP^3 \\
(s:u)\times(t:v) & \mapsto & (st:sv:ut:uv).   
\end{eqnarray*}
We denote by $\Sc$ its image, which is an irreducible surface of degree 2 in $\PP^3$, whose equation
in the coordinates $X_1,X_2,X_3,X_4$ of $\PP^3$ is known to be $X_1X_4-X_2X_3$.
Our strategy to solve our implicitization problem is to re\-pa\-ra\-me\-tri\-ze the
surface $\Hc$ by $\Sc \subset \PP^3$, that is to say to consider $\Hc$ as the closed image of the map
$\psi$ from $\Sc$ to
$\PP^3$ fitting in the commutative diagram
\begin{equation}
\xymatrix{
\PP^1\times \PP^1 \ar[r]^-{\phi} \ar[d]^{\rho} & \PP^3 \\
\Sc \ar[ur]_\psi }       \label{diagram}
\end{equation}

In the rest of this paper we will use the map $\psi = \phi \circ \rho^{-1}$ to implicitize $\Hc$,
which has the advantage of replacing the $\NN\times\NN$-graduation of $\PP^1\times \PP^1$ by a single $\NN$-graduation.
In order to justify this approach we need to describe explicitly the algebraic counterparts of the
maps in the above diagram.

\medskip

We begin with the map $\phi$. The polynomial ring $\kk[s,u]$ is canonically
$\NN$-graded,
$$\kk[s,u]=\bigoplus_{n\in\NN} \kk[s,u]_n=\kk[s,u]_0\oplus \kk[s,u]_1 \oplus \kk[s,u]_2\oplus \ldots$$
where $\kk[s,u]_i$ denotes the degree $i$ homogeneous
component of $\kk[s,u]$, and its homogeneous spectrum is the
projective line, i.e.~$\Proj(\kk[s,u])=\PP^1_\kk$. Of
course, the same is true for the polynomial ring $\kk[t,v]$. Now,
consider the $\NN$-graded $\kk$-algebra
$$S:=\bigoplus_{n\in \NN} \left(\kk[s,u]_n\otimes_\kk \kk[t,v]_n\right)  \subset \kk[s,u]\otimes_\kk \kk[t,v]$$
which is finitely generated by $S_1$ as an $S_0$-algebra.  Then $\PP^1\times \PP^1$ is the homogeneous spectrum $\Proj(S)$
of $S$. Introducing new indeterminates $T_1,T_2,T_3,T_4$, the map $\phi$ is hence induced by the graded $k$-algebra morphism
\begin{eqnarray*}
 \kk[T_1,T_2,T_3,T_4] & \xrightarrow{p} & S \\
T_i & \mapsto & f_i(s,u,t,v) \ \ i=1,\ldots,4.
\end{eqnarray*}
By \cite[Theorem 2.1]{BuJo03}, $\ker(p) \subset \kk[T_1,T_2,T_3,T_4]$ is the defining ideal of
the closed image of $\phi$ in $\PP^3=\Proj(\kk[T_1,\ldots,T_4])$;
it is prime (since $S$ is a domain) and principal (since it is of codimension one by
hypothesis and $\kk[T_1,T_2,T_3,T_4]$ is factorial), i.e. any generator of $\ker(p)$ gives an equation
of $\Hc$.

We now turn to the Segre embedding $\rho$. As we did for the map
$\phi$ (note that the Segre embedding is itself a parametrization
of a surface in projective space) the map $\rho$ is induced by the graded
$k$-algebra morphism
\begin{eqnarray*}
\kk[X_1,X_2,X_3,X_4] & \xrightarrow{\theta} & S \\
X_1 & \mapsto & st \\
X_2 & \mapsto & sv \\
X_3 & \mapsto & ut \\
X_4 & \mapsto & uv.
\end{eqnarray*}
However, in this case $\theta$ is surjective and graded (it preserves the degree). Moreover, it is easy to see\footnote{
We clearly have $(X_1X_4-X_2X_3)\subset \ker(\theta)$. Now, if $P\in
\ker(\theta)$ we deduce by a pseudo-euclidean division
that there exists $N\in \NN^\star$ such that
$$X_4^NP=Q(X_1,\ldots,X_4)(X_1X_4-X_2X_3)+R(X_2,X_3,X_4).$$
But then $R\in \ker(\theta)$ and it is obvious to check that we
have $\kk[X_2,X_3,X_4]\cap\ker(\theta)=0$.} that its
kernel is the principal ideal $(X_1X_4-X_2X_3) \subset
\kk[X_1,X_2,X_3,X_4]$. Therefore,  $\theta$ induces a graded
isomorphism of $\NN$-graded $\kk$-algebras
\begin{equation}\label{segre-iso}
\bar{\theta} : A:=\kk[X_1,X_2,X_3,X_4]/(X_1X_4-X_2X_3) \xrightarrow{\sim} S \nonumber
\end{equation}
which identifies $\PP^1\times \PP^1=\Proj(S)$ with the Segre variety
$\Sc=\Proj(A)\subset \PP^3=\Proj(\kk[X_1,X_2,X_3,X_4])$.

We are now ready to describe $\psi$. This map is of the form
\begin{eqnarray}\label{psi}
 \Sc \subset \PP^3 & \xrightarrow{\psi} & \PP^3 \\ \nonumber
(X_1:X_2:X_3:X_4) & \mapsto & (g_1:g_2:g_3:g_4)(X_1,X_2,X_3,X_4)
\end{eqnarray}
where $g_1,g_2,g_3,g_4$ are homogeneous polynomials of the same
degree in \\$\kk[X_1,X_2,X_3,X_4]$. By the graded isomorphism
$\bar{\theta}$, it follows that $\deg(\phi)=\deg(\psi)$ (we
understand co-restriction to $\Hc$) and also that the $g_i$'s must
have degree $d$. To give an algorithmic construction we just have
to determine the inverse map of $\bar{\theta}$. To do this, for
all $n\in \NN$ define the integer $k_{i,j}^{(n)}:=\max(0,n-i-j)$
and consider the map
\begin{eqnarray*}
S_n & \xrightarrow{\omega_n} & \kk[X_1,X_2,X_3,X_4]_n \\
s^iu^{n-i}t^jv^{n-j} & \mapsto &
X_1^{i+j-n+k_{i,j}^{(n)}}X_2^{n-j-k_{i,j}^{(n)}}X_3^{n-i-k_{i,j}^{(n)}}X_4^{k_{i,j}^{(n)}}
\end{eqnarray*}
(for all couples $(i,j)\in \{0,\ldots,n\}^2$). Then, we define the
map
$$\omega:=\bigoplus_{n\in \NN} \omega_n : S \rightarrow \kk[X_1,X_2,X_3,X_4]$$
which induces the inverse of $\bar{\theta}$ by passing to $A=\kk[X_1,X_2,X_3,X_4]/(X_1X_4-X_2X_3)$ (this is easy to
check). Observe also that no monomial in the image of $\omega$ is
divisible by $X_1X_4$, so our representation of the inverse of
$\bar{\theta}$ can be thought of as already reduced. Moreover,
the coefficients of the $f_i$'s and the $g_i$'s are in
correspondence: only the monomials are changed by $\omega$.

Therefore, we proved 
\begin{proposition} Defining for all $i=1,2,3,4$ the homogeneous polynomial
$$g_i(X_1,X_2,X_3,X_4):=\omega(f_i(s,u,t,v)) \in \kk[X_1,X_2,X_3,X_4]_d,$$
the map \eqref{psi} is a parametrization of the surface $\Hc\subset \PP^3$ with the property that $\deg(\psi)=\deg(\phi)$.
\end{proposition}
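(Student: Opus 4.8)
The plan is to assemble the proposition from the algebraic facts established in the preceding paragraphs, since almost all the work has already been done by the construction of $\omega$ and the analysis of $\theta$. First I would recall the commutative diagram \eqref{diagram} and observe that, on the level of graded $\kk$-algebras, it corresponds to the triangle formed by $p\colon \kk[T_1,\dots,T_4]\to S$, $\theta\colon\kk[X_1,\dots,X_4]\to S$ (with kernel $(X_1X_4-X_2X_3)$, hence inducing the isomorphism $\bar\theta\colon A\xrightarrow{\sim}S$), and the sought morphism $q\colon \kk[T_1,\dots,T_4]\to A$ characterizing $\psi$. The key point is that $\psi=\phi\circ\rho^{-1}$ is realized algebraically by $q:=\bar\theta^{-1}\circ p$, so that $T_i\mapsto \bar\theta^{-1}(f_i(s,u,t,v))$. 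The explicit map $\omega$ provides a concrete lift of $\bar\theta^{-1}$ to $\kk[X_1,\dots,X_4]$, degree by degree via the $\omega_n$, and the $g_i:=\omega(f_i)$ are by construction homogeneous of degree $d$ (since each $f_i$ lives in $S_d$ and $\omega_d$ lands in $\kk[X_1,\dots,X_4]_d$). This shows $\psi$ is given by \eqref{psi} with these particular $g_i$.

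Next I would verify that $\psi$ parametrizes $\Hc$ with the asserted degree. Since $\bar\theta$ is an isomorphism of graded $\kk$-algebras identifying $\Proj(A)=\Sc$ with $\PP^1\times\PP^1=\Proj(S)$, composing $\rho^{-1}$ with $\phi$ yields a rational map whose closed image coincides with the closed image of $\phi$, namely $\Hc$; that $\Hc$ is a surface and irreducible was part of the standing hypotheses. For the degree equality $\deg(\psi)=\deg(\phi)$, the point is that $\rho$ is an isomorphism onto its image $\Sc$, so $\rho^{-1}$ is birational and precomposing with it does not change the generic fiber cardinality of the map onto $\Hc$; equivalently, on the algebraic side, $\ker(q)$ and $\ker(p)$ correspond under $\bar\theta$, so they cut out the same subscheme of $\PP^3$ and the field extensions $\kk(\Hc)\hookrightarrow \mathrm{Frac}(A/\ker q)$ and $\kk(\Hc)\hookrightarrow\mathrm{Frac}(S/\ker p)$ are identified. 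This gives $\deg(\psi)=[\mathrm{Frac}(S):\kk(\Hc)]=\deg(\phi)$, as already noted in the text.

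The only genuinely substantive verification — and the one I expect to be the main (mild) obstacle — is checking that $\omega$, as defined by the piecewise formula with exponents $k_{i,j}^{(n)}=\max(0,n-i-j)$, really does induce a well-defined ring map $S\to A$ inverting $\bar\theta$. Concretely one must confirm: (i) the exponents $i+j-n+k_{i,j}^{(n)}$, $n-j-k_{i,j}^{(n)}$, $n-i-k_{i,j}^{(n)}$, $k_{i,j}^{(n)}$ are all nonnegative and sum to $n$, so $\omega_n$ indeed maps into $\kk[X_1,\dots,X_4]_n$; (ii) passing modulo $(X_1X_4-X_2X_3)$ makes $\omega$ multiplicative, i.e. $\omega(mm')\equiv\omega(m)\omega(m')$, which follows because $\theta(\omega(m))=m$ for every monomial $m\in S_n$ and $\theta$ is an isomorphism on $A$; and (iii) the image of $\omega$ consists of monomials not divisible by $X_1X_4$, so that the composite $A\to S\to A$ is the identity, giving $\omega\circ\bar\theta=\mathrm{id}_A$ and hence $\bar\theta^{-1}=\bar\omega$. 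All three are elementary arithmetic checks on the exponents, which the text rightly dismisses as ``easy to check''. I would therefore present the proof as: identify the algebraic model of $\psi$ as $\bar\theta^{-1}\circ p$, note that $\omega$ realizes $\bar\theta^{-1}$ at the level of representatives, conclude $g_i=\omega(f_i)\in\kk[X_1,\dots,X_4]_d$, and invoke the birationality of $\rho$ for the degree statement.
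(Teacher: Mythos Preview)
Your proposal is correct and follows essentially the same route as the paper, which presents the proposition not with a separate proof block but as the conclusion (``Therefore, we proved'') of the preceding discussion: establish the graded isomorphism $\bar\theta\colon A\xrightarrow{\sim}S$, construct its inverse via the explicit $\omega$, and read off both $g_i\in\kk[\underline{X}]_d$ and $\deg(\psi)=\deg(\phi)$ directly from that isomorphism. (One minor quibble of phrasing: since $q=\bar\theta^{-1}\circ p$ with $\bar\theta^{-1}$ injective, the kernels $\ker(q)$ and $\ker(p)$ are literally \emph{equal} as ideals of $\kk[T_1,\dots,T_4]$ rather than ``corresponding under $\bar\theta$'', and the relevant field extension for the degree is $\kk(\Hc)\hookrightarrow\mathrm{Frac}(S)\simeq\mathrm{Frac}(A)$, not a quotient by those kernels.)
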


Furthermore, we actually proved that our initial problem, namely the
implicitization of $\phi$ in terms of syzygies, is equivalent to
the same problem with the parametrization $\psi$ which is induced
by the map
  \begin{eqnarray*}
\kk[T_1,T_2,T_3,T_4] & \xrightarrow{h} &  A\\
T_i & \mapsto & g_i(X_1,X_2,X_3,X_4).
\end{eqnarray*}
This can be summarized by the following commutative diagram, which is the algebraic translation of the diagram
 \eqref{diagram}.
$$\xymatrix{
S \ar@{<-}[r]^-{p} \ar@<3pt>[d]^-{\bar{\omega}}  & \kk[T_1,T_2,T_3,T_4] \\
A \ar@{<-}[ur]_-{h} \ar@<3pt>[u]^-{\bar{\theta}}}    
$$

This shows that the syzygies of the $f_i$'s over $S$ are in
correspondence with the syzygies of the $g_i$'s over $A$, in
particular $ker(h)=ker(p)$. Moreover, it also shows that the base
points of the parametrization $\phi$ are in one-to-one
correspondence with the base points of the parametrization $\psi$
and that their local structure (complete intersection,
multiplicity, etc.) is preserved by this correspondence. 

Another interesting remark is the following: By \cite[Theorem 2.5]{BuJo03}, we deduce that we have the equality
$$\deg(\psi)\deg(\Hc)=\deg(\Sc)d^2-\sum_{\pp \in V(g_1,\ldots,g_4)\cap\Sc\subset \PP^3} e_\pp$$
where $e_\pp$ denotes the algebraic multiplicity (in the sense of
Hilbert-Samuel). Since it is immediate to check that $\deg(\Sc)=2$
we recover the well-known formula of intersection theory (see
\cite[Prop. 4.4]{Ful} or \cite[Appendix]{Co01}):
\begin{equation}\label{deg1}
\deg(\phi)\deg(\Hc)=2d^2-\sum_{\pp \in V(f_1,\ldots,f_4)\subset \PP^1\times \PP^1} e_\pp.
\end{equation}

\medskip

Therefore, \emph{in the rest of this paper we will focus on the
implicitization of $\psi$ by means of linear syzygies}, which is a
completely equivalent problem to our initial one.

\section{The approximation complex}\label{approx}

For simplicity, we will denote by $X_i$ the classes of each variable in
the quotient ring $A=\kk[\underline{X}]/(X_1X_4-X_2X_3)$, where $\underline{X}$ stands for the sequence $X_1,X_2,X_3,X_4$. Recall that $A$ is canonically graded, each variable having weight 1. Let
$I=(g_1,g_2,g_3,g_4) \subset A$ be the ideal generated by the
$g_i$'s. We give a brief definition of the approximation complex of
cycles associated to the sequence $g_1,g_2,g_3,g_4$ over $A$. This
has been studied in depth in \cite{HSV}, see also \cite{Va94}. Under certain conditions
this complex is a free resolution of the symmetric algebra
$\Sym_A(I)$, which is one of the main motivations for its study.
Another essential feature of this complex is that - unlike the
Koszul complex - its homology depends only on the ideal
$(g_1,\ldots,g_4)$,
not on the generators $g_i$. Here is the construction: 

We consider the Koszul complex
$(K_\bullet(\underline{g},A),d_\bullet)$ associated to
$g_1,\ldots,g_4$ over $A$ and denote $Z_i=\ker(d_i)$ and
$B_i=\im(d_{i+1})$. It is of the form
$$\xymatrix@C=0.8pc{  A(-4d) \ar[r]^-{d_4} & A(-3d)^4 \ar[r]^-{d_3} & A(-2d)^6 \ar[r]^-{d_2} & A(-d)^4 \ar[r]^-{d_1} & A  }$$
where the differentials are matrices whose non-zero entries are
$\pm g_1,\ldots, \pm g_4$. We introduce new variables
$T_1,\ldots,T_4$ and set $\Zc_i= Z_i(i \cdot d) \otimes_A
A[\underline{T}]$, which we will consider as bi-graded
$A[\underline{T}]$-modules (one grading is induced by the grading
of $A$, the other one comes from setting $\deg(T_i)=1$ for all
$i$). Now the approximation complex of cycles
$(\Zc_\bullet(\underline{g},A),e_\bullet)$, or simply
$\Zc_\bullet$, is the complex $$\xymatrix@C=0.8pc{ 0 \ar[r] &
\Zc_3(-3) \ar[r]^-{e_3} & \Zc_2(-2) \ar[r]^-{e_2} & \Zc_1(-1)
\ar[r]^-{e_1} & \Zc_0 }$$ where the differentials $e_\bullet$ are
obtained by replacing $g_i$ by $T_i$ for all $i$ in the matrices
of $d_\bullet$ (note that $\Zc_4=0$, since $d_4$ is injective). It
is an important remark that  
\begin{align}\label{im} 
\im(e_1) &= \left\{ \sum_{i=1}^4 p_iT_i \ | \  p_i \in
A[\underline{T}], \sum_{i=1}^4 p_ig_i=0 \right\} \\ \nonumber
 &= \left( \sum_{i=1}^4 p_iT_i \ | \  p_i \in
A, \sum_{i=1}^4 p_ig_i=0 \right) \subset A[\underline{T}]
\end{align}
and therefore
$H_0(\Zc_\bullet) = A[\underline{T}]/\im(e_1) \simeq \Sym_A(I)$.
Note that the degree shifts indicated in the complex above are
with respect to the grading given by the $T_i$'s, while the degree shifts
with respect to the grading of $A$ are already contained in our
definition of the $\Zc_i$'s. From now on, when we take the degree
$\nu$ part  of the approximation complex, denoted
$(\Zc_\bullet)_\nu$, it should always be understood to be taken
with respect to the grading induced by $A$. Hereafter we denote by
$\mm$ the ideal $(X_1,X_2,X_3,X_4) \subset A$.

\subsection{Acyclicity criterion}

Our first concern is to show that $\Zc_\bullet(g_1,\ldots,g_4;A)$, the approximation complex of
cycles  is acyclic under suitable
assumptions. We have, similarly to \cite[{Lemma} 2]{BuCh05}, the
following
\begin{lemma}
 \label{Zacyclicity} Suppose that $I=(g_1,g_2,g_3,g_4) \subset A$
is of codimension at least $2$, and let $\Pc
  :=\Proj (A/I) \subset \Sc$. Then the following are equivalent:
\begin{enumerate}
    \item[(i)] ${\Zc}_{\bullet}$ is acyclic,
\item[(ii)] ${\Zc}_{\bullet}$ is acyclic outside $V(\mm)$,
\item[(iii)] $\Pc$ is locally defined by $3$ equations
(i.e.~locally an almost complete intersection).
\end{enumerate}
\end{lemma}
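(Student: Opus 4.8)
The plan is to prove the equivalences by exploiting the standard machinery for approximation complexes, adapting the argument of \cite[Lemma 2]{BuCh05} from a polynomial ring to the hypersurface ring $A$. The key technical input is the \emph{acyclicity lemma} (of Peskine--Szpiro / ``lemma acyclique'' type): a complex of finitely generated modules over a Noetherian local ring, concentrated in non-negative homological degrees, is acyclic provided that for each $i>0$ the $i$-th module has depth at least $i$ and the $i$-th homology, when nonzero, has depth $0$. Since everything here is graded over the local-at-$\mm$ situation, depth estimates can be read off from local cohomology with support in $\mm$, and $\mm$-vanishing of local cohomology is exactly what ``acyclic outside $V(\mm)$'' controls via the two hypercohomology spectral sequences of $\Zc_\bullet$.

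First I would record the implication $(i)\Rightarrow(ii)$, which is trivial. For $(ii)\Rightarrow(i)$, the strategy is: since the codimension-$\ge 2$ hypothesis on $I$ forces $\Pc=\Proj(A/I)$ to have dimension $\le 0$ inside the surface $\Sc$, outside $V(\mm)$ the ideal $I$ is generated by a regular sequence of length $\le 2$ on the punctured spectrum except on $\Pc$, and one gets enough control to say that the only possible obstruction to acyclicity is supported at $\mm$. Concretely, I would compute the depths of the modules $\Zc_i = Z_i(id)\otimes_A A[\underline T]$: because $A$ is Cohen--Macaulay of dimension $3$ and the $Z_i$ are modules of Koszul cycles, the $Z_i$ have depth $\ge \min(3, \dots)$, and the relevant numerical bounds make $\mathrm{depth}(\Zc_i) \ge i$ hold precisely when $\mathrm{codim}(I)\ge 2$. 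Feeding this into the acyclicity lemma, and using $(ii)$ to guarantee that $H_i(\Zc_\bullet)$ for $i>0$ is (a finite direct sum of shifts of) an $\mm$-torsion module hence of depth $0$, yields $(i)$.

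Next, for the equivalence with $(iii)$, I would use the known characterization (from \cite{HSV}, see also \cite{Va94}) that the approximation complex $\Zc_\bullet$ of a sequence generating $I$ is acyclic if and only if that sequence is a \emph{proper sequence}, together with the local criterion that identifies, for an ideal of the expected codimension, properness with the condition $\mu_{\pp}(I_\pp) \le \dim A_\pp$ for all primes $\pp\supsetneq$ the minimal primes of $I$ --- i.e.\ $\Pc$ is locally an almost complete intersection. Since $A$ localized at a point of $\Sc$ is regular of dimension $2$ away from the (at most finitely many) singular points of $\Sc$ and of dimension $2$ there too (as $\Sc$ is a quadric, hence with isolated--or no--singularities, and in fact $A_\pp$ is a hypersurface ring of dimension $2$), the bound ``locally $3$ equations'' is the right transcription of $\mu_\pp(I_\pp)\le \dim A_\pp = 2$ at isolated points $\pp\in\Pc$ together with $\mathrm{codim}\, I \ge 2$; here one must be slightly careful that $A$ is not regular but only Gorenstein/Cohen--Macaulay, which is why ``complete intersection'' becomes ``almost complete intersection with one extra generator'', i.e.\ $3$ rather than $2$.

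I expect the main obstacle to be the depth bookkeeping over the non-regular ring $A$: unlike in \cite{BuCh05}, where the ambient ring is a polynomial ring and the Koszul cycles $Z_i$ have cleanly predictable depth, here one works over the hypersurface $A=\kk[\underline X]/(X_1X_4-X_2X_3)$, so I would need to verify carefully that $\mathrm{depth}_\mm Z_i(id) \ge i$ (equivalently, that $H^j_\mm(Z_i)=0$ for $j<i$) using the long exact sequences coming from the Koszul complex of $\underline g$ over $A$ and the Cohen--Macaulayness of $A$, and to check that the codimension hypothesis $\mathrm{codim}\,I\ge 2$ is exactly what makes these vanish. The rest --- the acyclicity lemma, the spectral sequence argument for ``acyclic iff acyclic outside $V(\mm)$'', and the translation between ``proper sequence'' and ``locally almost complete intersection'' --- is standard and follows the template of \cite{BuCh05} and \cite{BuJo03} closely.
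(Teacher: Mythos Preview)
Your approach splits naturally into two halves, and they deserve different verdicts.

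For $(i)\Leftrightarrow(ii)$, your use of the Peskine--Szpiro acyclicity lemma is a genuine and workable alternative to the paper's route. The paper does \emph{not} prove $(ii)\Rightarrow(i)$ directly; instead it goes $(ii)\Rightarrow(iii)\Rightarrow(i)$, using at both steps the proper-sequence characterization from \cite[Theorem 12.9]{HSV} and a duality argument. Your depth-chasing on the Koszul cycles $Z_i$ over the Cohen--Macaulay ring $A$ does give $\mathrm{depth}_\mm(Z_i)\ge i$ once $\mathrm{codim}\,I\ge 2$, and the acyclicity lemma then closes the loop. This is arguably more direct for $(i)\Leftrightarrow(ii)$ and has the virtue of not needing Gorenstein duality for that equivalence.

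For the equivalence with $(iii)$, however, there is a real gap. First, a numerical slip: at a closed point $\pp\in\Pc$ one has $\dim A_\pp=2$, so your stated criterion $\mu_\pp(I_\pp)\le\dim A_\pp$ would force $I$ to be locally a complete intersection (two equations), not an almost complete intersection (three). The condition you want is $\mu_\pp(I_\pp)\le\mathrm{ht}(I_\pp)+1=3$. More importantly, there is no off-the-shelf ``local criterion identifying properness with $\mu_\pp(I_\pp)\le\dim A_\pp$'' that applies here; the translation between properness and the local number of generators is exactly where the paper does real work. The paper passes to sufficiently generic linear combinations $h_1,\ldots,h_4$ with $h_1,h_2$ an $A$-regular sequence, and then invokes the Gorenstein property of $A$ to identify $H_1(h_1,h_2,h_3;A)\simeq\omega_{A/J}$ outside $V(\mm)$, whose annihilator is $(J:\mm^\infty)$. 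This is precisely what equates ``$h_4$ kills $H_1$'' (properness) with ``$h_4$ is locally in $J$'' (three equations suffice). Your sketch treats Gorenstein and Cohen--Macaulay as interchangeable for this step, but the canonical-module identification is what makes the argument go through, and it genuinely requires $A$ to be Gorenstein. Without it, you have not connected $(iii)$ to properness in either direction.
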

\begin{proof}
The proof is very similar to \cite[Lemma 2]{BuCh05}; the only
difference is that $A$ is not a polynomial ring here, but it is
still  a Gorenstein ring which is the main required property for
$A$. Observe that the lemma is unaffected by an extension of the
base field, so one may assume that $\kk$ is infinite.

By \cite[{Theorem} 12.9]{HSV}, we know that $\Zc_\bullet$ is
acyclic (resp. acyclic outside $V(\mm)$) if and only if $I$ is
generated by a proper sequence (resp.~$\Pc$ is locally defined by
a proper sequence). Recall that a sequence $a_1,\ldots,a_n$ of
elements in a commutative ring $B$ is a \emph{proper sequence} if
$$a_{i+1}H_{j}(a_1,\ldots,a_i;B)=0 \ \ \mathrm{for} \ i=0,\ldots,n-1 \
\mathrm{and} \ j>0,$$ where the $H_j$'s denote the homology groups
of the corresponding Koszul complex.

It is clear that (i) implies (ii). Assuming (ii), we will now deduce that
$\Pc$ is locally defined by a proper sequence. As explained in
\cite[{Lemma} 2]{BuCh05}, one can choose $h_1,h_2,h_3,h_4$ to be
sufficiently generic linear combinations of the $g_i$'s such that
\begin{itemize}
\item $(h_1,\ldots,h_4)=(g_1,\ldots,g_4) \subset A$,
 \item $h_1,h_2$ is an $A$-regular sequence, which implies that $h_1$, $h_2$, $h_3$ is a proper sequence in $A$,
\item $h_1,\ldots,h_4$ form a proper sequence outside $V(\mm)$.
\end{itemize}
By \cite[{Theorem} 1.6.16]{BH}, we have
$$H_1(h_1,h_2,h_3;A)\simeq \mathrm{Ext}^2_A(A/(h_1,h_2,h_3),A)$$
and since $A$ is Gorenstein (for it is a complete intersection),
i.e.~isomorphic to its canonical module \cite[Theorem 3.3.7]{BH},
then
\begin{equation}\label{canmod}
H_1(h_1,h_2,h_3;A)\simeq \mathrm{Ext}^2_A(A/J,A) \simeq
\omega_{A/J}
\end{equation}
outside $V(\mm)$, where $\omega_{-}$ stands for the canonical
module and $J:=(h_1,h_2,h_3)\subset A$. Since the annihilator of
$\omega_{A/J}$ over $A$  is $(J:\mm^\infty)\subset A$ (observe
that $A/J$ defines isolated points and use for instance
\cite[{Corollary} 21.3]{Eis}), we deduce that
 $h_4 \in (J:\mm^\infty)$, that is to say that $\Pc$ is locally defined by 3 equations.

Now, assume (iii). Similarly to what we did above, one can find
$h_1,\ldots,h_4$ sufficiently generic linear combinations of the
$g_i$'s so that $h_1,h_2$ is an $A$-regular sequence and
$h_1,h_2,h_3$ define $\Pc$. It follows that $h_4 \in
(J:\mm^\infty)\subset A$, where $J:=(h_1,h_2,h_3)\subset A$, and
hence \eqref{canmod} implies that $h_4$ annihilates
$H_1(h_1,h_2,h_3;A)$; it follows that $h_1,\ldots,h_4$ form a
proper sequence in $A$, so $\Zc_\bullet$ is acyclic.
\end{proof}

As soon as the base points (if there are any) of the
parametrization $\psi$ (or equivalently $\phi$) are isolated and
locally defined by 3 equations, then its associated approximation
complex of cycles is acyclic. Therefore, it can be used to compute
and represent the codimension  one part of the annihilator of the
$A[T_1,\ldots,T_4]$-module $H^0(\Zc_\bullet)$ which is nothing but
the symmetric algebra $\Sym_A(I)$. Our interest in this module is
motivated by the following

\begin{lemma}\label{annih}
Suppose that $\Pc:=\Proj(A/I)$ has dimension $\leq 0$ and is
locally defined by 3 equations. If $\eta$ is an integer such that
$$H^0_\mathfrak{m} ( \Sym_A(I) )_\nu =0 \  \text{ for all  } \nu
\geq \eta,$$ then, for all $\nu \geq \eta$ we have
$$\ann_{\kk[\underline{T}]} ( \Sym_A(I)_\nu )=\ann_{\kk[\underline{T}]} ( \Sym_A(I)_\eta ) \subseteq ker(h).$$
Moreover, the above inclusion is an equality if $\Pc$ is locally
defined by 2 equations.
\end{lemma}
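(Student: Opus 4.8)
The plan is to study the graded pieces of the symmetric algebra $\Sym_A(I)$ as $\kk[\underline{T}]$-modules and track how their annihilators vary with the degree $\nu$ coming from the grading of $A$. Write $M:=\Sym_A(I)$, so $M=\bigoplus_\nu M_\nu$ with each $M_\nu$ a finitely generated $\kk[\underline{T}]$-module. First I would recall that the surjection $p:\kk[\underline{T}]\to S\cong A$ from Section~\ref{segre} factors through $M$ via the natural surjection $\Sym_A(I)\twoheadrightarrow \Rees_A(I)$, whose kernel is exactly the $A[\underline{T}]$-torsion of $\Sym_A(I)$; since $\Rees_A(I)$ is a domain of dimension $2$ (it maps onto $A$, hence onto $\kk$ in degree zero, and its Proj is $\Hc$), the defining ideal of $\Hc$, namely $\ker(h)$, annihilates $\Rees_A(I)$ and therefore annihilates the image of $M_\nu$ in $(\Rees_A(I))_\nu$. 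The point of the hypothesis $\dim\Pc\le 0$ is that it forces $\Sym_A(I)$ and $\Rees_A(I)$ to agree away from $V(\mm)$, so their difference is an $\mm$-torsion module supported at finitely many points.

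Next I would exploit the vanishing of $H^0_\mm(M)_\nu$ for $\nu\ge\eta$. The exact sequence $0\to H^0_\mm(M)\to M\to \Rees_A(I)\to C\to 0$ (with $C$ the cokernel, also $\mm$-torsion) shows that in degrees $\nu\ge\eta$ the module $M_\nu$ injects into $(\Rees_A(I))_\nu$, hence $\ann_{\kk[\underline{T}]}(M_\nu)\supseteq \ker(h)$ is false in general but $\ann_{\kk[\underline{T}]}(M_\nu)\subseteq \ker(h)$? No — I must be careful about direction. The correct line is: for $\nu\ge\eta$, $M_\nu \hookrightarrow (\Rees_A(I))_\nu$, so $\ann(M_\nu)\supseteq \ann((\Rees_A(I))_\nu)$, and since $(\Rees_A(I))_\nu$ generates $\Rees_A(I)$ as a $\kk[\underline{T}]$-algebra in high degree (using that $I$ is generated in a single degree $d$ and the standard fact that $(\Rees_A(I))_\nu$ is faithful over $\Rees_A(I)$ for $\nu\gg 0$, which in turn follows because $\Proj\Rees_A(I)=\Hc$ and $\ker(h)$ is $(0:_{\kk[\underml{T}]}(\Rees_A(I))_\nu)$ eventually), we get $\ann((\Rees_A(I))_\nu)=\ker(h)$, whence $\ann_{\kk[\underline{T}]}(M_\nu)\subseteq\ker(h)$; combined with the reverse containment coming from $\ker(h)\cdot M_\nu \subseteq H^0_\mm(M)_\nu=0$, we conclude $\ker(h)\subseteq \ann(M_\nu)\subseteq \ker(h)$ — wait, that would already give equality, contradicting the ``inclusion'' in the statement. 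So the honest statement is only $\ann(M_\nu)\subseteq\ker(h)$ from the injection, and the reverse fails precisely because $\ker(h)$ need not kill $M_\nu$ — it kills $\Rees_A(I)$ but $M_\nu$ can be strictly larger even after modding out $\mm$-torsion if $\Pc$ is merely an almost complete intersection. This explains the last sentence: when $\Pc$ is a local complete intersection (2 equations), the approximation complex machinery of \cite{HSV} gives $\Sym_A(I)\cong\Rees_A(I)$ outside $V(\mm)$ with the extra property that the natural map is an isomorphism, forcing $\ann(M_\nu)=\ker(h)$.

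For the stabilization claim $\ann(M_\nu)=\ann(M_\eta)$ for all $\nu\ge\eta$, I would argue that multiplication by a generic linear form $\ell\in A_1$ induces, for $\nu\ge\eta$, an injection $M_\nu/H^0_\mm \to M_{\nu+1}$, but more to the point, since $M$ is a finitely generated graded $\kk[\underline{T}]\otimes_\kk A$-module and $A$ is generated in degree $1$, the annihilator ideals $\ann_{\kk[\underline{T}]}(M_\nu)$ form a decreasing chain for $\nu\ge\text{(some bound)}$ while the injections $M_\nu\hookrightarrow (\Rees_A(I))_\nu$ and the fact that $\bigoplus_{\nu\ge\eta}(\Rees_A(I))_\nu$ is a torsion-free rank-one module over $\Rees_A(I)/\sqrt{(0)}$ pin all these annihilators to the single prime $\ker(h)$ — hence constant. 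Concretely: $\ann(M_\eta)\supseteq\ann(M_\nu)$ is clear since $M_\nu$ is a quotient of $A_{\nu-\eta}\otimes M_\eta$; and $\ann(M_\eta)\subseteq\ann(M_\nu)$ because any $P\in\ker(h)$ with $P\in\ann(M_\eta)$... actually the cleanest route is: both $\ann(M_\eta)$ and $\ann(M_\nu)$ are sandwiched as $\subseteq\ker(h)$, and $\ker(h)$ is principal (it is the defining ideal of a hypersurface, from Section~\ref{segre}) and prime, so each $\ann(M_\nu)$ is either $\ker(h)$ or a proper sub-ideal; the key is then to show it is nonzero and that $P$ generating $\ker(h)$ always lies in it — which again follows from $\ker(h)M\subseteq H^0_\mm(M)$, zero in degree $\ge\eta$. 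I expect the main obstacle to be making the identification ``$\Sym$ equals $\Rees$ outside $V(\mm)$ and the annihilator of $(\Rees_A(I))_\nu$ is exactly $\ker(h)$ for $\nu$ large'' fully rigorous, i.e. controlling the $\mm$-torsion of $\Sym_A(I)$ uniformly and invoking the right result from \cite{HSV} (or \cite{BuJo03}, Theorem~2.5) for the complete-intersection refinement; the graded commutative algebra bookkeeping is routine once that structural input is in place.
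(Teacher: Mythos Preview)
Your proposal contains a genuine error that undermines the main line of argument. You claim that ``the hypothesis $\dim\Pc\le 0$ \dots\ forces $\Sym_A(I)$ and $\Rees_A(I)$ to agree away from $V(\mm)$.'' This is false. Agreement of $\Sym$ and $\Rees$ outside $V(\mm)$ means that $I$ is of linear type at every base point, and for an ideal of height~$2$ in a regular local ring of dimension~$2$ this holds precisely when the ideal is a complete intersection there, i.e.\ locally defined by $2$ equations. Under the standing hypothesis (locally $3$ equations, almost complete intersection) the kernel of $\Sym_A(I)\to\Rees_A(I)$ is \emph{not} in general $\mm$-torsion, so your intended injection $M_\nu\hookrightarrow(\Rees_A(I))_\nu$ for $\nu\ge\eta$ simply does not exist. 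Consequently the argument for stabilisation of $\ann(M_\nu)$ via the Rees algebra collapses, and your later attempt ``$\ker(h)\cdot M\subseteq H^0_\mm(M)$'' fails for the same reason. (You also reverse an inclusion: from $M_\nu$ being generated by $A_{\nu-\eta}\cdot M_\eta$ one gets $\ann(M_\eta)\subseteq\ann(M_\nu)$, not the opposite.)

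The stabilisation does not need the Rees algebra at all; this is what the paper's proof (following \cite[Proposition 5.1]{BuJo03}) does. The key input is only that $A_1\otimes_\kk A_\nu\to A_{\nu+1}$ is surjective. One direction: if $P\in\ann(M_\nu)$ and $a\in A_{\nu+1}$, write $a=\sum \ell_i a_i$ with $\ell_i\in A_1$, $a_i\in A_\nu$; then $P\cdot a=\sum \ell_i(P\cdot a_i)=0$, so $\ann(M_\nu)\subseteq\ann(M_{\nu+1})$. Other direction: if $P\in\ann(M_{\nu+1})$ and $a\in A_\nu$, then for every $\ell\in A_1$ one has $\ell\,(P\cdot a)=P\cdot(\ell a)=0$, hence $P\cdot a\in H^0_\mm(M)_\nu$, which vanishes for $\nu\ge\eta$. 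This gives $\ann(M_\nu)=\ann(M_\eta)$ with no reference to $\Rees$. The comparison with $\Rees_A(I)$ is then only used where it is actually valid: the inclusion $\ann(M_\nu)\subseteq\ann(\Rees_A(I)_\nu)=\ker(h)$ follows from the surjection $\Sym\twoheadrightarrow\Rees$ (this part of your sketch is fine), and the upgrade to equality uses that $I$ is of linear type outside $V(\mm)$, which is exactly the locally-$2$-equations hypothesis.
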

\begin{proof}
For all $\nu \geq \eta$, the equality
 $$\ann_{\kk[\underline{T}]} ( \Sym_A(I)_\nu )=\ann_{\kk[\underline{T}]}( \Sym_A(I)_\eta )$$
is proven in \cite[{Proposition} 5.1]{BuJo03} for $A=\kk[\underline{X}]$. However, the same
proof can be applied without modifications to our setting: The 
key property used in the proof is the fact that the canonical map $A_1 \otimes A_n \rightarrow A_{n+1}$
is surjective and this is also valid for $A=\kk[\underline{X}]/(X_1X_4-X_2X_3)$. 
Moreover, by \eqref{im} we have
that $\ann_{\kk[\underline{T}]} ( \Sym_A(I)_\nu )\neq 0$ for $\nu
\gg 0$  if and only if $\Pc$ is locally generated by at most $3$
equations, and in this case it is clear that it is contained in
$\ker(h)$. Finally, if $\Pc$ is locally defined by at most 2
equations, meaning that $\Pc$ is locally a complete intersection,
then $I$ is of linear type outside $V(\mm)$ (use for instance
\cite[{Propositions} 4.1 and 4.5]{BuJo03}) which shows the last
claimed equality as proven in \cite[{Proposition} 5.1]{BuJo03}.
\end{proof}

In other words, if the base points of the parametrization are
isolated and locally complete intersections then certain graded
parts of the approximation complex $\Zc_\bullet$ yield a way to
compute an implicit equation of $\Hc$. Our next task is to
explicitly describe the saturation index of the symmetric algebra,
i.e.~the integer $\eta$ appearing in \textsc{Lemma} \ref{annih}. This 
will provide us with the key tool for developing the algorithm presented
in Section \ref{algo}.

\subsection{The saturation index}

For any ideal $J$ of $A$ we denote by $J^\sat$ the saturation of
$J$ with respect to the ideal $\mm$,
i.e.~$J^\sat:=(J:_A\mm^\infty) \subset A$. Also, we recall that if
$M$ is a $\NN$-graded $B$-module, where $B$ is a $\NN$-graded
ring, its initial degree is defined as $$\indeg(M):=\min\{\nu \in
\NN : M_\nu\neq 0 \} \geq 0.$$ With these notations, we have 
\begin{theorem}\label{locosymalg}
If $\Pc:=\Proj(A/I)$ is a zero-dimensional scheme (i.e.~supported
on a finite number of points, possibly zero) then
$$H^0_\mm(\Sym_A(I))_\nu=0 \qquad \forall \nu \geq 2d-1-\indeg(I^\sat).$$
\end{theorem}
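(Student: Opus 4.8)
The plan is to analyze the local cohomology $H^0_\mm(\Sym_A(I))$ via the approximation complex $\Zc_\bullet$, using the fact (from Lemma \ref{Zacyclicity}, applicable since a zero-dimensional $\Pc$ is in particular locally an almost complete intersection) that $\Zc_\bullet$ is acyclic and hence is a free resolution of $\Sym_A(I) = H_0(\Zc_\bullet)$. First I would take the degree-$\nu$ strand $(\Zc_\bullet)_\nu$ with respect to the $A$-grading and compute the hypercohomology of $\Zc_\bullet$ with respect to $\mm$ using the two standard spectral sequences of the double complex $\check{C}^\bullet_\mm(\Zc_\bullet)$, where $\check{C}^\bullet_\mm$ is the \v{C}ech complex on the generators of $\mm$. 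One spectral sequence has $E_1$-terms $H^j_\mm(\Zc_i)$ and abuts to the hypercohomology; the other has $E_2$-terms $H^j_\mm(H_i(\Zc_\bullet))$, and since $\Zc_\bullet$ is acyclic the only nonzero homology is $H_0(\Zc_\bullet) = \Sym_A(I)$, so this second spectral sequence degenerates and identifies the hypercohomology in each total degree with $H^{j}_\mm(\Sym_A(I))$ (up to the homological shift coming from $\Zc_\bullet$ sitting in homological degrees $0,\dots,3$). In particular $H^0_\mm(\Sym_A(I))$ is read off from the top of the first spectral sequence.

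The key computational input is the local cohomology of the modules $\Zc_i = Z_i(id)\otimes_A A[\underline T]$, or rather of their graded strands. Since $A$ is a two-dimensional Cohen--Macaulay (indeed Gorenstein) graded ring with $\mm$ its irrelevant ideal, $H^j_\mm(-)$ vanishes for $j>2$, so only $H^1_\mm$ and $H^2_\mm$ of the $\Zc_i$ contribute. The modules $Z_i$ are the cycles of the Koszul complex $K_\bullet(\underline g;A)$; because $\Pc$ has dimension $\le 0$, the Koszul homology $H_i(\underline g;A)$ is supported on $V(\mm)$ for $i\ge 1$, which lets me control $H^j_\mm(Z_i)$ by breaking the Koszul complex into short exact sequences $0\to Z_i\to K_i\to B_{i-1}\to 0$ and $0\to B_i\to Z_i\to H_i\to 0$ and chasing local cohomology. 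The free Koszul modules $K_i = A(-id)^{\binom{4}{i}}$ have $H^2_\mm(A(-id))$ concentrated, by graded local duality against the canonical module $\omega_A = A(-2)$ (here $A$ has codimension $1$ in $\PP^3$, so $\omega_A = \mathrm{Ext}^1_{\kk[\underline X]}(A,\kk[\underline X](-4)) = A(2-4) = A(-2)$), in degrees $\le id - 2 + \text{(something)}$; carrying out this bookkeeping produces, for each $i$, an explicit bound on the $A$-degrees in which $H^j_\mm(\Zc_i)$ is nonzero, all of the shape $\nu < 2d - 1 - \indeg(I^\sat) + (\text{shift in }i)$. The role of $\indeg(I^\sat)$ enters precisely here: $I^\sat = (I:\mm^\infty)$ governs the degrees in which $H^0_\mm(A/I)$ lives, equivalently the bottom of the finite-length module measuring the failure of $I$ to be saturated, and this feeds into the local cohomology of $B_0 = I$ and hence up the chain.

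I expect the main obstacle to be the careful degree bookkeeping in the spectral sequence: one must track the $T$-grading shifts $(-i)$ attached to $\Zc_i$, the $A$-grading shifts $(id)$ built into the definition of $\Zc_i$, and the internal shifts coming from $\omega_A = A(-2)$ and from the generators having degree $d$, then verify that all contributions to the total-degree-$0$ part of the first spectral sequence that could survive to $H^0_\mm(\Sym_A(I))$ vanish in $A$-degrees $\ge 2d - 1 - \indeg(I^\sat)$. A secondary subtlety is justifying that the relevant edge maps in the spectral sequence either vanish or cannot create new classes in the critical degree; here one uses that $H^0_\mm$ of a free $A$-module vanishes (as $A$ has depth $\ge 1$ at $\mm$... in fact depth $2$), killing several would-be contributions outright, so that only the interplay of $H^1_\mm$ and $H^2_\mm$ terms needs genuine attention. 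Once the vanishing range is established for the strand $(\Zc_\bullet)_\nu$, the conclusion $H^0_\mm(\Sym_A(I))_\nu = 0$ for $\nu \ge 2d - 1 - \indeg(I^\sat)$ follows, completing the proof.
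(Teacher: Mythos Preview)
Your overall strategy---comparing the two spectral sequences of the double complex $C^\bullet_\mm(\Zc_\bullet)$ to control $H^0_\mm(\Sym_A(I))_\nu$---is exactly the paper's approach. However, there are two genuine errors that would make the argument, as written, fail.

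First, the claim that a zero-dimensional $\Pc$ is automatically locally an almost complete intersection is false. At a point $p\in\Sc$ the local ring is a two-dimensional regular local ring, and an ideal such as $\mm_p^3$ is primary to $\mm_p$ (hence zero-dimensional) but minimally requires four generators, so it is not an almost complete intersection. Consequently you cannot invoke Lemma~\ref{Zacyclicity} here, and $\Zc_\bullet$ need not be acyclic under the sole hypothesis of the theorem. The paper does \emph{not} assume acyclicity: in its ``horizontal-first'' spectral sequence the terms ${}_2'E^p_q=H^p_\mm(H_q(\Zc_\bullet))$ with $q>0$ may well be nonzero, but since $H_q(\Zc_\bullet)$ is supported on $V(I)$, which has Krull dimension $\le 1$ in $\mathrm{Spec}(A)$, one gets ${}_2'E^p_q=0$ for $p\ge 2$ and $q>0$. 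This is enough to compare with the other spectral sequence and conclude that $H^0_\mm(\Sym_A(I))_\nu$ vanishes once $({}_1''E^p_p)_\nu=0$ for $p=2,3$; your degeneration claim is an unnecessary (and unavailable) shortcut.

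Second, $A=\kk[X_1,\dots,X_4]/(X_1X_4-X_2X_3)$ has Krull dimension $3$, not $2$ (you are confusing $\dim A$ with $\dim\Proj(A)=\dim\Sc=2$). By Proposition~\ref{canmod-prop} with $n=4$ and $r=2$, the only nonvanishing local cohomology of $A$ is $H^3_\mm(A)$, and $\omega_A\simeq A(-2)$. So it is $H^3_\mm$, not $H^2_\mm$, that carries the duality, and the relevant terms in the ${}''E$ spectral sequence are precisely ${}_1''E^2_2$ and ${}_1''E^3_3$; the paper identifies these with $(I^\sat/I)^\star[2-2d]\otimes A[\underline T](-2)$ and $(A/I)^\star[2-d]\otimes A[\underline T](-3)$, from which the bound $\nu\ge 2d-1-\indeg(I^\sat)$ drops out. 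Your bookkeeping, based on $\dim A=2$, would produce the wrong range of $p$ and hence the wrong degree bounds.
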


The proof of this theorem is actually similar to the proof of
\cite[{Theorem} 4]{BuCh05}. The difference is that in our case the
ring $A$ is not a polynomial ring but a quotient ring. So to
validate the proof of \cite[{Theorem} 4]{BuCh05} we have to make
explicit the local cohomology and the dualizing module of $A$
which is, as a complete intersection, a Gorenstein ring (the key
property for what follows). We state these results in a little
more general case for the sake of clarity.

\begin{proposition}\label{canmod-prop}
Let $k$ be a commutative Noetherian ring and $C:=k[X_1,\ldots,X_n]$, with $n \geq 1$, which is canonically
graded by $\deg(X_i)=1$ for all $i=1,\ldots,n$. Suppose given a
homogeneous polynomial $f$ of degree $r\geq 1$ and consider the
graded quotient ring $B:=C/(f)$. The following properties hold:
\begin{itemize}
 \item $\omega_B\simeq B(-n+r)$, a graded isomorphism where $\omega_B$ stands for the canonical module of $B$,
\item $H^i_\mm(B)=0$ if $i\neq n-1$ and for all $\nu \in \ZZ$
$$H^{n-1}_\mm(B)_\nu \simeq B(-n+r)_{-\nu},$$
\item if $K_\bullet$
denotes the Koszul complex associated to a given sequence
$(a_1,\ldots,a_s)$ of homogeneous elements in $B$ of degree
$d_1,\ldots,d_s$ respectively, then we have the isomorphisms
$$H^{n-1}_\mm(K_\bullet)_\nu\simeq \Hom_{B/\mm}(K_{s-\bullet}(\sum_{i=1}^s d_i-n+r)_{-\nu},B/\mm).$$
\end{itemize}
\end{proposition}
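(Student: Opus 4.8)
The plan is to prove the three bulleted assertions about $B = C/(f)$ in sequence, as they build on one another. The first and central step is to compute the canonical module $\omega_B$. Since $f$ is a homogeneous nonzerodivisor in $C = k[X_1,\dots,X_n]$, the quotient $B$ is a complete intersection over $k$, hence a Gorenstein $k$-algebra, so $\omega_B \simeq B(a)$ for some shift $a\in\ZZ$; the only work is to pin down $a$. I would do this via the short exact sequence $0 \to C(-r) \xrightarrow{\,\cdot f\,} C \to B \to 0$ together with the well-known fact that $\omega_C \simeq C(-n)$ for the polynomial ring in $n$ variables (with the convention that $C$ is itself Gorenstein, $k$ being Noetherian; one reduces to $k$ a field by base change if one prefers the classical statement, or simply cites \cite[\S3.6]{BH} for the relative version). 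Applying $\Hom_C(-,\omega_C)$ — or, equivalently, using the change-of-rings formula $\omega_B \simeq \mathrm{Ext}^1_C(B,\omega_C)$ valid because $B$ has codimension $1$ in $C$ — and feeding in the free resolution $0 \to C(-r) \to C \to B \to 0$ of $B$ over $C$, one gets $\omega_B \simeq \coker\big(\omega_C \to \omega_C(r)\big) \simeq B(-n+r)$. This gives the first bullet.

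The second bullet is then local duality. Because $B$ is Gorenstein of Krull dimension $n-1$, graded local duality over $B$ (with respect to $\mm = (X_1,\dots,X_n)B$) gives $H^i_\mm(B) = 0$ for $i \neq n-1$ and $H^{n-1}_\mm(B) \simeq \Homgr_k\big(\omega_B, k\big)$ as graded $B$-modules, i.e. the graded $k$-dual of $\omega_B$. Since $\omega_B \simeq B(-n+r)$ by the first bullet, taking graded duals degree by degree yields $H^{n-1}_\mm(B)_\nu \simeq \big(B(-n+r)\big)_{-\nu}$ — which is exactly the stated isomorphism once one tracks the shift through the dualizing functor. (Alternatively, and perhaps cleaner for the write-up, one can derive the vanishing $H^i_\mm(B)=0$ for $i<n-1$ directly from depth considerations — $B$ is Cohen–Macaulay of dimension $n-1$ — and the top piece from the long exact sequence in local cohomology attached to $0 \to C(-r)\to C\to B\to 0$, using the classical computation of $H^n_\mm(C)$.)

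For the third bullet, I would combine the second bullet with a hypercohomology spectral sequence argument. Apply the functor $H^\bullet_\mm(-)$ to the Koszul complex $K_\bullet := K_\bullet(a_1,\dots,a_s;B)$. Since $H^i_\mm(B) = 0$ except for $i = n-1$, the spectral sequence of the double complex degenerates and gives $H^{n-1}_\mm(K_\bullet) \simeq H_\bullet\big(K_\bullet \otimes_B H^{n-1}_\mm(B)\big)$, with the appropriate regrading. Now substitute $H^{n-1}_\mm(B) \simeq$ (graded $k$-dual of $B(-n+r)$) from the second bullet; the tensor $K_\bullet \otimes_B (\text{graded dual of } B)$ is identified, via the self-duality of the Koszul complex $K_\bullet \simeq \Hom_B(K_{s-\bullet}, B)(-\sum d_i)$ and adjunction $\Hom_B(K_{s-\bullet},B)\otimes_B \Homgr_k(B,k) \simeq \Homgr_k(K_{s-\bullet}, k)$, with $\Homgr_{B/\mm}\big(K_{s-\bullet}, B/\mm\big)$ up to the total shift $\sum_{i=1}^s d_i - n + r$. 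Reading off the degree-$\nu$ strand gives precisely the claimed formula $H^{n-1}_\mm(K_\bullet)_\nu \simeq \Hom_{B/\mm}\big(K_{s-\bullet}(\sum d_i - n + r)_{-\nu}, B/\mm\big)$.

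The main obstacle is bookkeeping rather than conceptual: getting every grading shift exactly right, and — if one wants the statement in the stated generality over an arbitrary Noetherian base ring $k$ rather than a field — being careful that ``Gorenstein'', ``canonical module'' and ``graded local duality'' are being used in their relative forms (as in \cite[Ch.~3]{BH}), since over a general $k$ the ring $C = k[X_1,\dots,X_n]$ is Gorenstein relative to $k$ and the dualizing complex carries $k$ itself in degree $0$. The cleanest route is to prove everything first for $k$ a field, where all three bullets are standard, and then either invoke flat base change for the Koszul homology statement or simply note that in the application $k$ will be a field, so the generality is a convenience, not a necessity.
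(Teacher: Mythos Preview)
Your proposal is correct and essentially matches the paper's approach. The paper establishes the first bullet via the change-of-rings formula $\omega_B\simeq(\omega_C/f\omega_C)(r)$ from \cite[Corollary~3.6.14]{BH} (your $\mathrm{Ext}^1$ computation), derives the second bullet directly from the long exact local-cohomology sequence attached to $0\to C(-r)\to C\to B\to 0$ (precisely the alternative you flag as ``perhaps cleaner for the write-up''), and dispatches the third bullet in one line as a ``direct generalization'' of the classical polynomial-ring identity---whereas you actually spell out the Koszul self-duality and spectral-sequence degeneration behind it.
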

\begin{proof} To prove the first claim, we first recall that we have $\omega_C \simeq C(-n)$.
Then, \cite[{Corollary} 3.6.14]{BH} shows that $$\omega_B \simeq
(\omega_C/f.\omega_C)(r)\simeq B(-n+r).$$

For the second claim, we recall that the local cohomology of $C$
is well-known:  $H^i_\mathfrak{m}(C)=0$ for all $i \neq n$ and
\begin{equation}\label{Cdual}
H^n_\mathfrak{m}(C)_\nu \simeq C_{-n-\nu}\end{equation}
 for all $\nu \in \ZZ$. Now, the exact sequence
$$\xymatrix@C=1pc{0 \ar[r] & C(-r) \ar[r]^-{\times f} & C \ar[r] &
B \ar[r] & 0}$$ whose long exact cohomology sequence contains the
segments
$$\xymatrix@C=1pc{ H^{j}_\mathfrak{m}(C) \ar[r] & H^{j}_\mathfrak{m}(B) \ar[r] & H^{j+1}_\mathfrak{m}(C(-r)) }$$
implies that $H^j_\mathfrak{m}(B)=0$ for all $j < n-1$ as for $j+1
< n$ both the left and the right hand side vanish. Furthermore,
the segment $$\xymatrix@C=1pc{0 \ar[r] & H^{n-1}_\mathfrak{m}(B)
\ar[r] & H^{n}_\mathfrak{m}(C(-r)) \ar[r] & H^{n}_\mathfrak{m}(C)
}$$ taken in degree $\nu$ shows $$H^{n-1}_\mathfrak{m} \left(B
\right)_\nu = \ker\left( H^n_\mathfrak{m}(C(-r))_\nu \rightarrow
H^n_\mathfrak{m}(C)_\nu \right).$$ By the self-duality of the
Koszul complex and \eqref{Cdual} this later equals exactly
$B_{-\nu-n+r}$. Finally, since $\dim(C)=n$ we have $\dim(B)=n-1$
which implies that $H^j_\mm(B)=0$ for $j>n-1$ by \cite[{Theorem}
3.5.7]{BH}.

The third claim is a direct generalization of the classical
property
$$H^{n}_\mm(K_\bullet)_\nu\simeq \Hom_{C/\mm}(K_{s-\bullet}(\sum_{i=1}^s d_i-n)_{-\nu},C/\mm).$$
The only thing which changes is the shift by $r$ in the canonical
module of $B$ and the dimension of $B$ which is $n-1$ whereas
$\dim(C)=n$.
\end{proof}

{\sc Proof of theorem \ref{locosymalg}}. We consider the
two spectral sequences associated to the double complex
$H^{\bullet}_{\mm}(\Zc_{\bullet})$:
$$\xymatrix@C=1pc@R=1pc{ 0 \ar[r] & C^0_\mathfrak{m}(\Zc_3) \ar[d] \ar[r] & C^0_\mathfrak{m}(\Zc_2)  \ar[d] \ar[r] &
C^0_\mathfrak{m}(\Zc_1) \ar[d] \ar[r] & C^0_\mathfrak{m}(\Zc_0) \ar[d] \ar[r] & 0 \\
0 \ar[r] & C^1_\mathfrak{m}(\Zc_3) \ar[d] \ar[r] &
C^1_\mathfrak{m}(\Zc_2)  \ar[d] \ar[r] & C^1_\mathfrak{m}(\Zc_1)
\ar[d] \ar[r] & C^1_\mathfrak{m}(\Zc_0) \ar[d] \ar[r] & 0 \\
& \vdots \ar[d] & \vdots \ar[d] & \vdots \ar[d] & \vdots \ar[d] \\
0 \ar[r] & C^4_\mathfrak{m}(\Zc_3)  \ar[r] &
C^4_\mathfrak{m}(\Zc_2)  \ar[r] & C^4_\mathfrak{m}(\Zc_1)
 \ar[r] & C^4_\mathfrak{m}(\Zc_0) \ar[r] & 0}$$ They both converge to the
hypercohomology of $\Zc_{\bullet}$. One of them stabilizes at
level two with:
$$
{_{2}{'}E}^{p}_{q}={_{\infty}{'E}}^{p}_{q}=\left\{
\begin{array}{cl}
H^{p}_{\mm}(H_{q}(\Zc_{\bullet})) &\hbox{for}\ p=0,1\
  \hbox{and}\ q>0 \\
  H^{p}_{\mm}(\Sym_{A}(I)) &\hbox{for}\ q=0\\
0 &\hbox{else}.\\
\end{array}\right.
$$
and the other one gives at level one:
$$
{_{1}{''E}}^{p}_{q}=H^{p}_{\mm}(Z_{q})[qd]\otimes_{A}A[\underline{T}](-q).
$$
As explained in \cite[{Theorem} 4]{BuCh05}, the comparison of
these two spectral sequences and  \cite[{Lemma} 1]{BuCh05}
show\footnote{Note that \cite[{Lemma} 1]{BuCh05} can be applied
verbatim in our case (modulo some little change on the degree
shifts that we will describe below) because of
\textsc{Proposition} \ref{canmod-prop}.} that the module
$H^0_\mm(\Sym_A(I))_\nu$ vanishes as soon as $(_1{''}E^{p}_p)_\nu$
vanishes for $p=2,3$. Moreover, setting
$\hbox{---}^{\star}:=\Homgr_{A}(\hbox{---},A/\mm )$, we have the
graded isomorphisms
$$_{1}{''}E_3^3 \simeq  (A/I)^\star[2-d]\otimes_A A[\underline{T}](-3)$$
and
$$_{1}{''}E_2^2 \simeq (I^\sat/I)^\star[2-2d]\otimes A[\underline{T}](-2).$$
It follows that $(_1{''}E^{2}_2)_\nu$ and $(_1{''}E^{3}_3)_\nu$
vanish simultaneously if
$$\nu > \min(d-2,2d-2-\indeg(I^\sat/I)).$$ This is true whenever $\nu \geq \nu_0:=2d-1-\indeg(I^\sat)$, since
$\min(d,\indeg(I^\sat/I))=\indeg(I^\sat)$. \hfill
$\square$

\begin{remark} Since $I$ is generated in degree $d$ and $I\subset I^\sat$ we have the inequality
$0\leq \indeg(I^\sat) \leq d$. It follows that
$$ d-1 \leq 2d-1-\indeg(I^\sat) \leq 2d-1.$$
The lower bound is reached whenever the ideal $I$ is saturated
(meaning $I=I^\sat$) and the higher bound corresponds to the
abscence of base points of the parametrization.
\end{remark}

\subsection{The main result}\label{mainresult}

We now have all the tools necessary at our disposal and can
proceed to the main result of this paper. But before, recall that
there are two distinct notions of multiplicity for a base point
$\pp \in V(I)\cap \Sc \subset \PP^3$: the algebraic multiplicity
denoted $e_\pp$ and the geometric multiplicity denoted $d_\pp$
(see for instance \cite[\S2.2]{BuJo03} for more details).

\begin{theorem}\label{mainth}
Assume that $\dim \Pc:=\Proj(A/I) \leq 0$ and that $\Pc$ is
locally an almost complete intersection (i.e.~locally defined by 3
equations). Then, for every integer
$$\nu \geq \nu_0:=2d-1-\indeg(I^{\sat})$$
the determinant $D$ of the complex $(\Zc_\bullet)_\nu$ of
$\kk[\underline{T}]$-modules (which is unique up to multiplication by a non-zero constant in $\kk$) is a non-zero
homogeneous element in $\kk[\underline{T}]$, independent of $\nu \geq \nu_0$ and of degree 
$$2d^2-\sum_{\pp\in V(I)\cap \Sc \subset \PP^3} d_\pp$$
such that $D=F^{\deg(\psi)}G$ where $F$ is the implicit equation
of $\Hc$, $G$ is coprime with $F$ and $\deg(G)=\sum_{\pp\in V(I)\cap \Sc}
(e_\pp-d_\pp)$.

Moreover, $G\in \kk\setminus \{ 0 \}$ if and only if $\Pc$ is locally a
complete intersection (i.e.~locally defined by 2 equations).
\end{theorem}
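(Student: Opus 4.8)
The plan is to follow the strategy of \cite[Theorem 4]{BuCh05}, exploiting the fact that the approximation complex has already been shown to be acyclic (Lemma \ref{Zacyclicity}) and that the relevant local cohomology vanishing has been pinned down (Theorem \ref{locosymalg}, Lemma \ref{annih}). First I would recall the general machinery of the determinant of a complex: since $(\Zc_\bullet)_\nu$ is a complex of finitely generated free $\kk[\underline{T}]$-modules, acyclic once we invert suitable elements, its determinant $D$ is well-defined up to a nonzero scalar and equals, up to units, the $0$-th Fitting ideal generator of $H_0((\Zc_\bullet)_\nu)=\Sym_A(I)_\nu$. The acyclicity of $\Zc_\bullet$ in the range $\nu\geq\nu_0$ (which follows by combining Lemma \ref{Zacyclicity} with the depth estimates coming from Theorem \ref{locosymalg}, exactly as in \cite{BuCh05}) guarantees that $D\neq 0$, and the standard additivity of the determinant over the graded pieces, together with the stabilization $\ann_{\kk[\underline T]}(\Sym_A(I)_\nu)=\ann_{\kk[\underline T]}(\Sym_A(I)_{\nu_0})$ from Lemma \ref{annih}, shows $D$ is independent of $\nu\geq\nu_0$.

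Next I would identify $D$ with a power of $F$ times an extraneous factor. By Lemma \ref{annih}, $\ann_{\kk[\underline T]}(\Sym_A(I)_\nu)\subseteq\ker(h)=(F)$, and since $D$ generates (up to scalar and radical considerations) this annihilator's relevant part, every irreducible factor of $D$ other than $F$ must be supported on the base locus. To make this precise I would localize: away from the base points $\Pc$, the ideal $I$ is of linear type (this is invoked in the proof of Lemma \ref{annih}), so $\Sym_A(I)\simeq\Rees_A(I)$ there, and the fibre of $\Proj(\Rees_A(I))\to\PP^3$ over a generic point of $\Hc$ has length $\deg(\psi)$; this yields that the multiplicity of $F$ in $D$ is exactly $\deg(\psi)$. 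The complementary factor $G$ is then supported precisely at the images of the base points, and a local computation at each $\pp$ (comparing the length of the symmetric-algebra fibre with that of the Rees-algebra fibre) gives $\deg(G)=\sum_\pp(e_\pp-d_\pp)$; combined with the total degree count $\deg(D)=2d^2-\sum_\pp d_\pp$ (which comes from the Hilbert-polynomial computation of $\Sym_A(I)_\nu$ as $\nu\to\infty$, using $\deg(\Sc)=2$ and formula analogous to \eqref{deg1}) this forces $\deg(F)\deg(\psi)+\deg(G)=2d^2-\sum_\pp d_\pp$, consistent with $\deg(F)\deg(\psi)=2d^2-\sum_\pp e_\pp$.

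For the final equivalence, I would argue that $G\in\kk\setminus\{0\}$ iff $e_\pp=d_\pp$ for every base point $\pp$, and then invoke the characterization (again from \cite[\S2.2]{BuJo03} and Lemma \ref{annih}) that $e_\pp=d_\pp$ at an isolated base point exactly when $I$ is locally a complete intersection at $\pp$, i.e.\ locally defined by $2$ equations. The "only if" direction uses that if some $\pp$ is only an almost complete intersection then $e_\pp>d_\pp$, contributing a genuine non-constant factor to $G$; the "if" direction uses that under the complete-intersection hypothesis $I$ is of linear type everywhere outside $V(\mm)$, so $\Sym_A(I)$ and $\Rees_A(I)$ agree outside $V(\mm)$, killing the extraneous factor entirely (this is precisely the equality case of Lemma \ref{annih}).

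The main obstacle I anticipate is the local length computation at the base points: transferring the machinery of \cite{BuCh05} — which is written over a polynomial ring $\kk[\underline X]$ — to the quotient ring $A=\kk[\underline X]/(X_1X_4-X_2X_3)$ requires checking that all the homological inputs (the Gorenstein property of $A$, the explicit form of its dualizing module and local cohomology, the surjectivity of $A_1\otimes A_n\to A_{n+1}$) genuinely suffice, which is exactly what Proposition \ref{canmod-prop} is designed to supply. Care is also needed to distinguish the algebraic multiplicity $e_\pp$ from the geometric one $d_\pp$ throughout, since the degree of $D$ involves $d_\pp$ while the degree of $F$ involves $e_\pp$, and the gap is accounted for entirely by $G$.
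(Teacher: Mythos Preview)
Your proposal is correct and follows essentially the same strategy as the paper's own proof: independence of $\nu$ from the local-cohomology vanishing, identification of the multiplicity of $F$ in $D$ as $\deg(\psi)$ via the divisor/length formalism at $\qc=\ker(h)$ (the paper invokes \cite[Theorem~5.2]{BuJo03} directly for $\length((\Sym_A(I)_\nu)_\qc)=\deg(\psi)$), and the final equivalence via $e_\pp=d_\pp$ iff $\pp$ is locally a complete intersection. The one place where the paper is more economical than your sketch is the computation of $\deg(G)$: rather than a local comparison of symmetric- versus Rees-algebra fibres at each base point, the paper simply subtracts the two global degree formulas $\deg(\psi)\deg(\Hc)=2d^2-\sum_\pp e_\pp$ (equation~\eqref{deg1}) and $\deg(D)=2d^2-\sum_\pp d_\pp$ (the latter obtained from the alternating sum of the $\dim(Z_i)$'s using $H_2\simeq\omega_{A/I}$), which immediately yields $\deg(G)=\sum_\pp(e_\pp-d_\pp)$ without any local analysis.
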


\begin{proof}
First of all, observe that $D$ is independent of $\nu$ by {\sc
theorem} \ref{locosymalg}. It is an homogeneous element of
$\kk[\underline{T}]$ because $(\Zc_\bullet)_\nu$ is a graded
complex of $\kk[\underline{T}]$-modules and it is non-zero because
$\Pc$ is locally an almost complete intersection, a fact we already used in 
\textsc{Lemma} \ref{annih}.
% 
% $\ker(h)$ is a codimension 1 prime ideal of $\kk[\underline{T}]$
% which is contained in
% $\ann_{\kk[\underline{T}]}(H_0((\Zc_\bullet)_\nu))$  by
% \textsc{Theorem} \ref{locosymalg} and \textsc{Lemma} \ref{annih}.

The computation of $\deg(D)$ can be done as in {\cite[Theorem
4]{BuCh05}}: For $\nu \gg 0$ we have
$$\deg(D)=\dim(Z_1)_{\nu+d}-2\dim(Z_2)_{\nu+2d}+3\dim(Z_3)_{\nu+3d}.$$
In the case where all the $H_i$'s, with $i>0$, vanish then
$\deg(D)=2d^2$. If $H_1$ and $H_2$ are non-zero, then they
contribute to the above quantity for
\begin{multline}
\dim (H_1)_{\nu+d}-\dim (H_2)_{\nu+d} - 2 \dim (H_2)_{\nu+2d} \\
= \dim (H_0)_{\nu+d} -2\dim (H_2)_{\nu+2d} = -deg \Pc
 \end{multline}
where we assume that $\nu \gg 0$, since $H_2\simeq \omega_{A/I}$.
Therefore, we deduce that
\begin{equation}\label{deg2}
\deg(D)=2d^2-\deg\Pc=2d^2-\sum_{\pp\in V(I)\cap \Sc \subset \PP^3} d_\pp.
\end{equation}

Now, setting $\qc:=\ker(h)$ and using standard properties of
determinants of complexes we compute
\begin{align*}
[\det((\Zc_\bullet)_\nu)] &= \mathrm{div}(H_0(\Zc_\bullet)) \\
&= \mathrm{div}(\Sym_A(I)_\nu) \\
&= \sum_{ \pp \text{ prime}, \ \mathrm{codim}(\pp)=1 } \length((\Sym_A(I)_\nu)_\pp)\cdot [\pp]\\
&= \length((\Sym_A(I)_\nu)_\qc)\cdot [\qc] + \cdots. \\
\end{align*}
Since $\length((\Sym_A(I)_\nu)_\qc)=\deg(\psi)$ as proved in
{\cite[Theorem 5.2]{BuJo03}}, we deduce that $D=F^{\deg(\psi)}G$
where $G$ does not divide $F$.

Finally, using equations \eqref{deg1} and \eqref{deg2} we deduce that $$\deg(G)=\sum_{\pp \in V(I)\cap \Sc} (e_\pp
-d_\pp),$$
 and it is well-known that $e_\pp\geq d_\pp$ with equality
if and only if the point $\pp$ is locally a complete intersection.
\end{proof}

Recall that the determinant of the complex $(\Zc_\bullet)_\nu$ can
either be obtained as an alternating product over some sub-determinants of
the matrices appearing in the complex or as a $gcd$ of maximal
minors of the first map in the $(\Zc_\bullet)_\nu$-complex (we
will explicitly construct this matrix $M$ in the next section).
One can either compute this $gcd$ directly or factorize one of the
maximal minors, however, both methods are computationally
expensive (as all existing implicitization methods). 

From a practical point of view, it might be interesting to avoid the
actual computation of an implicit equation and use instead the matrix $M$
as an implicit representation of the surface, since it is more
compact and much easier to compute. To give an example, let us
suppose that we are in the case of locally complete intersection base points. Then if we
want to decide if a given point $P$ lies on the surface there is no
need to compute the implicit equation: It suffices to evaluate $M$
in this point, as the rank of $M$ drops if and only if $P$ belongs to
the surface. 

This is due to the fact that for a commutative ring $R$ and a
morphism $\alpha : R^m \rightarrow R^n$ with $m \geq n$ we always have
$$\ann_R(\coker(\alpha))^n \subseteq I_n(\alpha) \subseteq
\ann_R(\coker(\alpha))$$ where $I_n(\alpha)$ denotes the ideal
generated by the maximal minors of the matrix of $\alpha$,
i.e.~the principal Fitting ideal of $\alpha$ (see for instance
\cite[{Proposition} 20.7]{Eis}). Ours is the special case
$R=\kk[\underline{T}]$ and $\alpha$ is the first map in
$(\Zc_\bullet)_\nu$, i.e. the one induced by $e_1$, and hence
$coker(\alpha)=\Sym_A(I)_\nu$. Geometrically, this means that the
maximal minors of $M$ define the hypersurface $\Hc$ by
\textsc{Lemma} \ref{annih}, and consequently, the points for which
the rank of $M$ drops are
exactly those belonging to $\Hc$. 

Similarly, other problems arising from applications might be
solved by direct computations using the matrix representation
without the (expensive) transition to the implicit equation.

\section{Algorithm}\label{algo}

In order to show explicitly how the theoretical results from the
previous sections are used in practice, we formulate an algorithm
for the actual computation of the matrix representing the implicit
equation. It is efficient and easy to implement, as it consists
basically of the resolution of a linear system. We give only the
essential steps, see \cite[Section 3]{BuCh05} for a more detailed
description of a very similar algorithm.
\begin{itemize}
\item Given four bi-homogeneous polynomials $f_1,f_2,f_3,f_4$ of
degree $d$, define the homogeneous polynomials $g_1,g_2,$ $g_3,g_4
\in A=\kk[\underline{X}]/(X_1X_4-X_2X_3)$ of the same degree by
setting $g_i = \omega(f_i)$, where $\omega$ is the isomorphism
defined in Section \ref{segre}.

\item Find the solution space $W$ of the linear system (over
$\kk$) defined by $$\sum_{i \in \{1,\ldots,4\}} a_i g_i = 0$$
where $(a_1,a_2,a_3,a_4) \in (A_{\nu_0})^4$  and $\nu_0
=2d-1-\indeg(I^\sat)$, i.e. one writes the equation with respect
to a basis of $A_{\nu_0+d}$ and compares the coefficients. $W$ is
represented by a $\dim_\kk (A_{\nu_0+d}) \times
4\dim_\kk(A_{\nu_0})$-matrix $N$, where the first $k :=
\dim_\kk(A_{\nu_0})$ columns represent the coefficients of $a_1$,
the next $k$ coefficients $a_2$, etc.

\item For $i \in \{1,\ldots,4\}$, let $M_i$ be the $k \times
k$-matrix $T_i \cdot \mathrm{Id}_k$. Then
$$M := N \cdot \begin{pmatrix} M_1 \\ \vdots \\ M_4 \end{pmatrix}$$ is a
matrix of the first map of the graded part $({\Zc_\bullet})_{\nu_0}$of the approximation complex.
\end{itemize}

As we proved, in the case where the base points of the
parametrization $\phi$ are isolated and locally complete
intersections, $M$ represents the surface $\Hc$. Also, the $gcd$
of the maximal minors (of size $k$) of $M$ equals its implicit
equation.

\subsubsection*{An illustrative example}

We now present an example to illustrate our method, which provides
a matrix-based representation of the implicit equation of $\Hc$ by
means of the linear syzygies of its parametrization $\phi$ (or more
precisely, of $\psi$). It should be emphasized that all the following
computations are presented in order to explore in detail our
approach and are not all required to get the expected matrix-based
representation. Our code is written for {\tt Macaulay2} (see
\cite{M2}), in which one can easily compute all the terms and maps of
the approximation complex.

Consider the following example taken from \cite[Example
4.16]{AHW05}:
\begin{verbatim}
S=QQ[s,u,t,v];
d=2;
f1=u^2*t*v+s^2*t*v
f2=u^2*t^2+s*u*v^2
f3=s^2*v^2+s^2*t^2
f4=s^2*t*v 
F=matrix{{f1,f2,f3,f4}}
\end{verbatim}
Note that the interested reader can experiment with his own
example just by changing the above definitions of the polynomials
$f_1,f_2,f_3,f_4$ giving the parametrization.

The first thing to do is to use the isomorphism $\bar{\theta}$
to switch from $S$ (note that the ring $S$ defined in the above
command is not exactly the ring $S$ we have introduced in Section
\ref{segre}) to the ring $A$:
\begin{verbatim}
SX=S[x1,x2,x3,x4]
F=sub(F,SX)
ST={}; X={};
for i from 0 to d do (
  for j from 0 to d do (
    k=max(0,d-i-j);
    ST=append(ST,s^i*u^(d-i)*t^j*v^(d-j));
    X=append(X,
      x1^(i+j-d+k)*x2^(d-j-k)*x3^(d-i-k)*x4^(k));
    )
  )
ST=matrix {ST}; ST=sub(ST,SX); X=matrix {X};
(M,C)=coefficients(F,Variables=>
              {s_SX,u_SX,t_SX,v_SX},Monomials=>ST)
G=X*C -- this is the parametrization, but in SX
A=QQ[x1,x2,x3,x4]/(x1*x4-x2*x3)
r=map(A,SX,{x1,x2,x3,x4,0,0,0,0})
G=r(G);
G=matrix{{G_(0,0),G_(0,1),G_(0,2),G_(0,3)}}
\end{verbatim}
The matrix $G$ is the matrix (with entries in $A$) of the
parametrization $\psi$ from the Segre variety $\Sc$ to $\PP^3$.
One should note that the quotient ring $A$ is a very simple
quotient ring: essentially, computations in $A$ can be done in
$\kk[X_1,\ldots,X_4]$ modulo the substitution of $X_1X_4$ by
$X_2X_3$. Moreover, bases for $A$ in any given degrees can easily
be pre-computed since they do not depend on the given
parametrization $\phi$.

We can now define the terms of the approximation complex of cycles
$\Zc_\bullet$:

\begin{verbatim}
Z0=A^1;
Z1=kernel koszul(1,G);
Z2=kernel koszul(2,G);
Z3=kernel koszul(3,G);
\end{verbatim}
As we already remarked, $\Zc_4 = 0$. Define the integer

\begin{verbatim}
nu=2*d-1
\end{verbatim}
We can compute the Euler characteristic of $(\Zc_\bullet)_\nu$ and
check that it is zero with the command
\begin{verbatim}
hilbertFunction(nu,Z0)-hilbertFunction(nu+d,Z1)+
hilbertFunction(nu+2*d,Z2)-hilbertFunction(nu+3*d,Z3)
\end{verbatim}
and also compute the degree of $D$, the determinant of the complex
$(\Zc_\bullet)_\nu$, with the command
\begin{verbatim}
hilbertFunction(nu+d,Z1)-2*hilbertFunction(nu+2*d,Z2)
+3*hilbertFunction(nu+3*d,Z3)
\end{verbatim}
This number equals the degree of $\Hc$ if all the base points, if any, form
locally a complete intersection. In this example, we find degree 7.

At this step, one can try to lower the integer $\nu$ according to
{\sc Theorem} \ref{mainth}; to this end we compute the degrees of
the generators of the saturation of the ideal $(g_1,\ldots,g_4)$:
\begin{verbatim}
degrees gens saturate(ideal G,ideal(x1,x2,x3,x4))
\end{verbatim}
Since, in this example, the smallest degree is 1 we can redefine
\begin{verbatim}
nu=2*d-2
\end{verbatim}
and we can re-check the Euler characteristic and the degree of the
determinant of $(\Zc_\bullet)_\nu$.

We can now compute the matrix of the first map of
$(\Zc_\bullet)_\nu$, that is to say the matrix of linear syzygies
of $g_1,\ldots,g_4$ which represents $\Hc$:
\begin{verbatim}
R=A[T1,T2,T3,T4]
G=sub(G,R);
Z1nu=super basis(nu+d,Z1);
Tnu=matrix{{T1,T2,T3,T4}}*substitute(Z1nu,R);
(m,M)=
 coefficients(Tnu,Variables=>{x1_R,x2_R,x3_R,x4_R},
             Monomials=>substitute(basis(nu,A),R));
\end{verbatim}
The matrix {\tt M} is the desired matrix, and it is of size $9\times
12$.

\section{Comments and conclusion}

We have presented a new approach to compute an implicit
representation in terms of linear syzygies for a surface in $\mathbb{P}^3$ parametrized by 
bi-ho\-mo\-ge\-neous polynomials of bi-degree $(d,d)$, $d\geq 1$, under the
assumption that the base points are isolated and locally complete intersections.
 This result, along with the similar
ones for parametrizations over the projective plane, shows that in
many cases it is not necessary to use quadratic syzygies in
order to represent the implicit equation of a surface. 

We should point out that this method has the advantages of being
valid in a very general setting (we have neither assumed
birationality nor made other additional assumptions on the
parametrization) and of working well in the presence of base
points. Furthermore, the matrix representing the surface can be
computed in a very efficient way. \\

It would be nice if we could use the same method for mixed degrees
as well, i.e. consider parametrizations by bi-ho\-mo\-ge\-neous
polynomials of bi-degree $(d_1,d_2)$ with $d_1,d_2\geq 1$. Let us
discuss some ideas on how to generalize to the mixed case:
\begin{itemize}
 \item Putting weights on the variables in $S$ will not give us good
properties for $S$, for instance $S$ will not be generated by
$S_1$ as an $S_0$ algebra in general. \item Considering the
bi-degree $(\max(d_1,d_2),\max(d_1,d_2))$ is not possible because
 it introduces a base point locus of positive dimension and we will lose the acyclicity of the approximation
 complex.
\item One way to come back to unmixed bi-degree is to make the
substitutions
$$s \leftarrow s^{\lcm(d_1,d_2)/d_1} \text{ and } t
\leftarrow t^{\lcm(d_1,d_2)/d_2}.$$ Everything works fine in this
case, but we are not representing $F^{\deg(\psi)}$, but
$F^{\deg(\psi)\lcm(d_1,d_2)/\gcd(d_1,d_2)}$ which is is not
optimal, as it increases the size of the matrices involved. For
instance, we could treat Example 10 from \cite{KD06} in this way.
It is a surface of bi-degree (2,3) defined by \begin{eqnarray*} f_1 &=&(t+t^2)(s-1)^2+(1+st-s^2t)(t-1)^2 \\
f_2 &=&  (-t-t^2)(s-1)^2 + (-1+st+s^2t)(t-1)^2\\
f_3 &=&  (t-t^2)(s-1)^2 + (-1-st+s^2t)(t-1)^2\\
f_4 &=& (t+t^2)(s-1)^2 + (-1-st-s^2t)(t-1)^2
\end{eqnarray*} By replacing $s$ by $s^3$ and $t$ by $t^2$, we obtain
 a parametrization of bi-degree (6,6) and $F^6$ can indeed be
computed in degree $\nu \geq 2 \cdot 6-1 - 6 = 5$ of the
approximation complex as the $gcd$ of the maximal minors of a $42
\times 36$-matrix, whereas in the original paper it was computed
as the determinant of a $5 \times 5$-matrix.
\end{itemize}

Therefore, it seems that the tools we used above (and which work
well for unmixed bi-degree) are not well-suited for this more
general case and that it might be necessary to take into account
the bi-graded structure of S in order to devise a method that is
adapted to mixed bi-degrees. We hope to develop this in the near
future.

%ACKNOWLEDGMENTS are optional
\section{Acknowledgments}
The authors have been partially supported by the French ANR
``Gecko".

\bibliographystyle{abbrv}

\begin{thebibliography}{10}

\bibitem{AHW05}
W.~A. Adkins, J.~W. Hoffman, and H.~H. Wang.
\newblock Equations of parametric surfaces with base points via syzygies.
\newblock {\em J. Symbolic Comput.}, 39(1):73--101, 2005.

\bibitem{BH}
W.~Bruns and J.~Herzog.
\newblock {\em Cohen-{M}acaulay rings}, volume~39 of {\em Cambridge Studies in
  Advanced Mathematics}.
\newblock First edition. Cambridge University Press, Cambridge, 1993.

\bibitem{BuCh05}
L.~Bus{\'e} and M.~Chardin.
\newblock Implicitizing rational hypersurfaces using approximation complexes.
\newblock {\em J. Symbolic Comput.}, 40(4-5):1150--1168, 2005.

\bibitem{BCD03}
L.~Bus{\'e}, D.~Cox, and C.~D'Andrea.
\newblock Implicitization of surfaces in {${\mathbb{P}}\sp 3$} in the presence
  of base points.
\newblock {\em J. Algebra Appl.}, 2(2):189--214, 2003.

\bibitem{BuJo03}
L.~Bus{\'e} and J.-P. Jouanolou.
\newblock On the closed image of a rational map and the implicitization
  problem.
\newblock {\em J. Algebra}, 265(1):312--357, 2003.

\vfill\eject

\bibitem{Co03}
D.~Cox.
\newblock Curves, surfaces, and syzygies.
\newblock In {\em Topics in algebraic geometry and geometric modeling}, volume
  334 of {\em Contemp. Math.}, pages 131--150. Amer. Math. Soc., Providence,
  RI, 2003.

\bibitem{Co01}
D.~A. Cox.
\newblock Equations of parametric curves and surfaces via syzygies.
\newblock In {\em Symbolic computation: solving equations in algebra, geometry,
  and engineering (South Hadley, MA, 2000)}, volume 286 of {\em Contemp.
  Math.}, pages 1--20. Amer. Math. Soc., Providence, RI, 2001.

\bibitem{Eis}
D.~Eisenbud.
\newblock {\em Commutative algebra}, volume 150 of {\em Graduate Texts in
  Mathematics}.
\newblock Springer-Verlag, New York, 1995.
%\newblock With a view toward algebraic geometry.

\bibitem{Ful}
W.~Fulton.
\newblock {\em Intersection theory}, volume~2 of {\em Ergebnisse der Mathematik
  und ihrer Grenzgebiete (3) [Results in Mathematics and Related Areas (3)]}.
\newblock Springer-Verlag, Berlin, 1984.

\bibitem{M2}
D.~R. Grayson and M.~E. Stillman.
\newblock Macaulay 2, a software system for research in algebraic geometry.
\newblock Available at http://www.math.uiuc.edu/Macaulay2/.

\bibitem{HSV}
J.~Herzog, A.~Simis, and W.~V. Vasconcelos.
\newblock Koszul homology and blowing-up rings.
\newblock In {\em Commutative algebra (Trento, 1981)}, volume~84 of {\em
  Lecture Notes in Pure and Appl. Math.}, pages 79--169. Dekker, New York,
  1983.

\bibitem{KD06}
A.~Khetan and C.~D'Andrea.
\newblock Implicitization of rational surfaces using toric varieties.
\newblock {\em J. Algebra}, 303(2):543--565, 2006.

\bibitem{SC95}
T.~Sederberg and F.~Chen.
\newblock Implicitization using moving curves and surfaces.
\newblock Computer Graphics Annual Conference Series, pages 301--308, 1995.

\bibitem{Va94}
W.~V. Vasconcelos.
\newblock {\em Arithmetic of blowup algebras}, volume 195 of {\em London
  Mathematical Society Lecture Note Series}.
\newblock Cambridge University Press, Cambridge, 1994.

\end{thebibliography}

\end{document}